\newtheorem{theorem}{Theorem}
\theoremstyle{plain}
\newtheorem{definition}{Definition}
\numberwithin{equation}{section}
\begin{document}
\title[New properties of grand amalgam spaces]{New properties of grand amalgam spaces}
\author{A.Turan G\"{u}rkanl\i }
\address{Istanbul Arel University Faculty of Science and Letters Department
of Mathematics and Computer Sciences}
\email{turangurkanli@arel.edu.tr}
\date{}
\subjclass[2000]{Primary 46E30; Secondary 46E35; 46B70.}
\keywords{Grand Lebesgue space, generalized grand Lebesgue space, grand
Wiener Amalgam space}

\begin{abstract}
In $ \left[14\right]$, a new family called grand amalgam space $W( L^{p),\theta},L^{q),\theta })$ of amalgam spaces was defined and investigated properties of these spaces. The present paper is a sequel to my work $[14].$ In this paper, notations are included in Section 1. In Section 2, we introduce another equivalent but discrete definition of grand amalgam space and study properties of these spaces. In Section 3, we determine necessary and sufficient conditions on a locally compact Abelian group $G$ for the grand amalgam spaces $W(L^{p,)\theta},L^ {q,\theta})$ to be an algebra under convolution. 
\end{abstract}

\maketitle

\section{Notations}

Let $\Omega $ be a bounded subset of  $\ R$. The grand Lebesgue space $L^{p)}\left( \Omega \right) $ was
 introduced by Iwaniec-Sbordone in $\left[ 16\right]. $ These authors, in their studies related with the integrability  properties of the Jacobian in a bounded open set  $\Omega $, defined the grand lebesgue space. This Banach space is defined by the norm
\begin{align*}
\left\Vert {f}\right\Vert _{p)}=\sup_{0<\varepsilon \leq p-1}\left(
\varepsilon \int_{\Omega }\left\vert f\right\vert ^{p-\varepsilon }d\mu
\right) ^{\frac{1}{p-\varepsilon }}\tag{1}
\end{align*}
where $1<p<\infty . $ For $0<\varepsilon \leq p-1,$ $L^{p}\left( \Omega \right) \subset L^{p)}\left( \Omega \right) \subset \L^{p-\varepsilon }\left( \Omega \right)$
 hold. For some properties and applications of $L^{p)}\left( \Omega \right) $ spaces we refer to papers $\left[
1\right] ,\left[ 3\right] ,\left[ 5\right] ,\left[ 6\right] ,$ $\left[ 12
\right] $and $\left[ 13\right] .$ We have for all $1<p<\infty $ and $\varepsilon >0$ 
\begin{equation*}
L^{p}\left( \Omega \right) \subset L^{p),\theta }\left( \Omega \right) \subset
L^{p-\varepsilon }\left( \Omega \right) .\tag{2}
\end{equation*}%
 A generalization of the grand Lebesgue
spaces are the spaces $L^{p),\theta }\left( \Omega \right) ,$ $\theta \geq 0,$
defined by the norm
\begin{equation*}
\left\Vert f\right\Vert _{p),\theta ,\Omega }=\left\Vert f\right\Vert
_{p),\theta }=\sup_{0<\varepsilon \leq p-1}\varepsilon ^{\frac{\theta }{
p-\varepsilon }}\left( \int_{\Omega }\left\vert f\right\vert ^{p-\varepsilon
}d\mu \right) ^{\frac{1}{p-\varepsilon }}=\sup_{0<\varepsilon \leq
p-1}\varepsilon ^{\frac{\theta }{p-\varepsilon }}\left\Vert {f}\right\Vert
_{p-\varepsilon };\tag{3}
\end{equation*}%
when $\theta =0$ the space $L^{p),0}\left( \Omega \right) $ reduces to Lebesgue
space $L^{p}\left( \Omega \right) $ and when $\theta =1$ the space $%
L^{p),1}\left( \Omega \right) $ reduces to grand Lebesgue space $L^{p)}\left(
\Omega \right)$,  $\left( \text{see }\left[ 1\right] ,\left[ 12\right]
\right) $ . It is known that the subspace $\overline{C_{0}^{\infty }}$ is not dense in $L^{p)}\left( \Omega \right). $  Its closure consists of functions $f \in L^{p)}\left(\Omega \right)$  such that
\begin{equation}
\lim_{\varepsilon \rightarrow 0}\varepsilon ^{\frac{\theta }{p-\varepsilon }
}\left\Vert f\right\Vert _{p-\varepsilon}=0, \left[12\right]. \tag{4}
\end{equation}
 It is also known that the grand Lebesgue space  $L^{p),\theta }\left( \Omega \right) ,$ is not reflexive.
Different properties and applications of these spaces were discussed in $%
\left[ 1\right] ,\left[ 12\right] $ and $\left[ 13\right] .$

Let\ $1\leq p<\infty ,\theta \geq 0,$ and $J$ be the one of the set of $%
\mathbb{N}
,$ $%
\mathbb{Z}
$ or $%
\mathbb{Z}
^{n}.$ We define the grand Lebesgue sequence space $\ell ^{p),\theta }=\ell
^{p),\theta }\left( J\right) $ by the norm%
\begin{equation*}
\left\Vert u\right\Vert _{\ell ^{p),\theta }\left( J\right)
}=\sup_{0<\varepsilon \leq p-1}\left( \varepsilon ^{\theta
}\sum\limits_{k\in J}\left\vert u_{k}\right\vert ^{p-\varepsilon }\right) ^{
\frac{1}{p-\varepsilon }}=\sup_{0<\varepsilon \leq p-1}\varepsilon ^{\frac{
\theta }{p-\varepsilon }}\left\Vert u\right\Vert _{\ell ^{p-\varepsilon
}\left( J\right) }.\tag{5}
\end{equation*}

Let $p^{^{\prime }}=\frac{p}{p-1},$ $1<p<\infty .$ First consider an
auxiliary space namely $L^{(p^{\prime },\theta }\left( \Omega \right)
,\theta >0,$ defined by 
\begin{equation*}
\left\Vert g\right\Vert _{(p^{^{\prime }},\theta
}=\inf_{g=\sum\limits_{k=1}^{\infty }g_{k}}\left\{
\sum\limits_{k=1}^{\infty }\inf_{0<\varepsilon \leq p-1}\varepsilon ^{-%
\frac{\theta }{p-\varepsilon }}\left( \int_{\Omega }\left\vert
g_{k}\right\vert ^{\left( p-\varepsilon \right) ^{^{\prime }}}dx\right) ^{%
\frac{1}{\left( p-\varepsilon \right) ^{^{\prime }}}}\right\}\tag{6}
\end{equation*}%
where the functions $g_{k},k\in 
\mathbb{N}
,$ being in $\mathcal{M}_{0},$ the set of all real valued measurable
functions, finite a.e. in $\Omega .$ After this definition the generalized
small Lebesgue spaces have been defined by 
\begin{equation*}
L^{p)^{^{\prime }},\theta }\left( \Omega \right) =\left\{ g\in \mathcal{M}%
_{0}:\left\Vert g\right\Vert _{p)^{^{\prime }},\theta }<+\infty \right\} ,
\end{equation*}%
where%
\begin{equation*}
\left\Vert g\right\Vert _{p)^{^{\prime },\theta }}=\sup_{\substack{ 0\leq
\psi \leq \left\vert g\right\vert  \\ \psi \in L^{(p^{\prime },\theta }}}%
\left\Vert \psi \right\Vert _{(p^{^{\prime }},\theta }.\tag{7}
\end{equation*}%
For $\theta =0$ it is $\left\Vert f\right\Vert _{(p^{^{\prime
}},0}=\left\Vert f\right\Vert _{p)^{^{\prime }},\theta },\left[ 4\right] ,
\left[ 11\right] .$

Let $1\leq p,q\leq \infty .$ The space $( L^{p),\theta}) _{loc}$consists
of (classes of) measurable functions $f$ $:\Omega \rightarrow \mathbb{C}$
such that $f\chi _{K}\in L^{p),\theta},$ for any compact subset $K\subset \Omega ,$
 where $\chi _{K}$ is the characteristic function of $f.$ It is a topological
vector space with the family of seminorms $f\rightarrow \left\Vert
f\right\Vert _{p),\theta}.$ Since $L^{p}\subset L^{p),\theta},$ it is easy to show that $
\left( L^{p}\right) _{loc}\hookrightarrow \left( L^{p)}\right) _{loc}.$ 

The grand amalgam space was defined and studied some properties in [14]. This space is defined  as follows:

Let $\Omega $ be a finite subset of real numbers $R$. Also assume that $
1<p,q<\infty $ and $Q\subset \Omega $ is a fix compact subset with nonemty
interior. The grand Wiener amalgam space 
$W( L^{p),\theta},L^{q),\theta })$ consists of all functions (classes of) $
f\in \left( L^{p),\theta }\right) _{loc}$ such that the control function 
\begin{equation*}
F_{f}^{p),\theta }\left( x\right) =F_{f}^{p),\theta ,Q}\left(
x\right) =\left\Vert f\chi _{Q+x}\right\Vert _{p),\theta
}=\sup_{0<\varepsilon \leq p-1}\varepsilon ^{\frac{\theta }{
p-\varepsilon }}\left\Vert f\chi _{Q+x}\right\Vert _{p-\varepsilon }
\end{equation*}
lies in $L^{q),\theta}$, where $x\in R$. The norm of $W( L^{p),\theta
},L^{q),\theta }) $ defined by 
\begin{equation*}
\| f\| _{W( L^{p),\theta},L^{q),\theta}) }=\| F_{f}^{p),\theta}\| _{q),\theta
}=\| \| f\chi _{Q+x}\| _{p),\theta
}\| _{q),\theta}.\tag{8}
\end{equation*}

Since generalized grand Lebesgue spaces are not translation invariant, this
result reflects to the grand Wiener amalgam spaces. Then the definition
 of $W\left( L^{p),\theta},L^{q)}\right)\left(\Omega\right) $ depends on the
choice of $Q.$ 
\par Given any neighbourhood $U$ of $0$ \ in the set of real numbers $ R$, a family $X=(x_i)_{i\in I}\subset R $
 is called $U$-dense if the family $(x_{i}+U)_{i\in I}$ cover $R$. That is
$$\cup_{i\in I}{(x_{i}+U)}=R.$$
The family $X$ is called separated if the sets  $(x_{i}+U)_{i\in I}$ are pairwise disjoint. The family $X$ is called
relatively separated proved that it is finite union of separated sets. We will call a family 
$X=(x_i)_{i\in I}\subset R $ is well-spread in $R$ if it is both $U$-dense and relatively separated.
\par Let $\Omega $ be a finite subset of real numbers $R$. Another equivalent
 but discrete definition of  $W\left( L^{p),\theta},L^{q)}\right)\left(\Omega\right) $ 
is given by using the bounded uniform partition of unity (for short BUPU),
that is a sequence of non-negative functions $\Psi =\left( \psi _{i}\right)
_{i\in I}$ on $\Omega $ corresponding to a sequence $\left( y_{i}\right) $
in $\Omega $ such that
\begin{itemize}
\item[ (a)]  $\sum \psi _{i}\equiv 1,$

\item[ (b)] $\sup_{i\in I}\left\Vert \psi _{i}\right\Vert _{L^{\infty
}}<M,$ for some $M>0,$

\item[ (c)] There exists a compact subset $U\subset \Omega $ with
non-emty interior and $y_{i}\in \Omega $ such that $\sup \left( \psi
_{i}\right) \subset U+y_{i},$

\item[ (d)] For each compact subset $K\subset \Omega,$
$$
\sup_{x\in X}\natural \left\{ i:x\in K+y_{i}\right\} =\sup \natural \left\{
j\in I:K+y_{i}\cap K+y_{j}\neq \phi \right\} <\infty .
$$
\end{itemize}
\section{Discrete Grand Amalgam Space}
\par In this section we will introduce another equivalent but discrete definition of grand amalgam spaces and study properties of these spaces. 
\begin{definition}
Let $\left( \ x _{i}\right) _{i\in I}\subset \Omega$ be well-spread family in $\Omega.$ For any grand Lebesgue space  
 $L^{p),\theta }\left( \Omega \right) ,$ $\theta \geq 0$, we define the associate
 discrete space $\left ( L^{p),\theta }\right) $ as
 $$
\left ( L^{p),\theta }\right)_{d}=\left\{\Lambda :\Lambda =\left(\lambda_{i}\right)_{i \in I} 
\text{ with }
 \sum\limits_{i\in I}\left\vert\lambda\right\vert\chi_{x_{i}+U}\in L^{p),\theta }\left( \Omega \right)\right\}
$$   
with the norm 
$$
\left\Vert\Lambda\right\Vert _{\left ( L^{p),\theta }\right)_{d}}=
\left\Vert\sum\limits_{i\in I}\left\vert\lambda\right\vert\chi_{x_{i}+U}\right\Vert_{p),\theta}
$$

Since $L^{p),\theta}$ is not translation invariant then   $\left ( L^{p),\theta }\right)_{d}$ 
depends on the choice of $U$.
\end{definition}
\begin{theorem}
The discrete of the grand Lebesgue space $ L^{p),\theta }\left(\Omega\right) $ is the the grand Lebesgue sequence space $\ell^{p),\theta}\left(I\right).$
\end{theorem}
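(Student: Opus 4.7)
The plan is to prove a two-sided norm equivalence $\|\Lambda\|_{(L^{p),\theta})_d} \asymp \|\Lambda\|_{\ell^{p),\theta}(I)}$ by decoupling the supremum over $\varepsilon$ from a pointwise estimate on
\[
f := \sum_{i \in I} |\lambda_i|\,\chi_{x_i+U}
\]
at each fixed $\varepsilon \in (0, p-1]$. The well-spread hypothesis on $(x_i)_{i \in I}$ supplies two structural facts I will use throughout: by relative separation there is an overlap constant $N_0$ with $\sum_i \chi_{x_i+U}(x) \le N_0$ for every $x \in \Omega$, and $I$ splits as a finite disjoint union $I = I_1 \cup \cdots \cup I_N$ such that for each $j$ the sets $\{x_i + U\}_{i \in I_j}$ are pairwise disjoint.

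For the upper bound I would decompose $f = \sum_{j=1}^N f_j$ with $f_j = \sum_{i \in I_j} |\lambda_i| \chi_{x_i+U}$. Disjointness of supports inside each $f_j$ gives
\[
\|f_j\|_{p-\varepsilon}^{p-\varepsilon} \;=\; |U|\sum_{i \in I_j} |\lambda_i|^{p-\varepsilon},
\]
and then the triangle inequality in $L^{p-\varepsilon}$ yields $\|f\|_{p-\varepsilon} \le N\,|U|^{1/(p-\varepsilon)}\,\|\Lambda\|_{\ell^{p-\varepsilon}(I)}$.

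For the lower bound I note that at any point $x \in \Omega$ at most $N_0$ of the characteristic functions $\chi_{x_i+U}(x)$ are nonzero, so
\[
\sum_i |\lambda_i|^{p-\varepsilon}\chi_{x_i+U}(x) \;\le\; N_0 \max_i \bigl(|\lambda_i|\chi_{x_i+U}(x)\bigr)^{p-\varepsilon} \;\le\; N_0\, f(x)^{p-\varepsilon}.
\]
Integrating over $\Omega$ and taking $(p-\varepsilon)$-th roots gives $|U|^{1/(p-\varepsilon)}\|\Lambda\|_{\ell^{p-\varepsilon}(I)} \le N_0^{1/(p-\varepsilon)} \|f\|_{p-\varepsilon}$.

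To conclude, I would multiply both inequalities by $\varepsilon^{\theta/(p-\varepsilon)}$ and take the supremum over $\varepsilon \in (0, p-1]$. Because $1/(p-\varepsilon)$ ranges in the bounded interval $(1/p, 1]$, the factors $|U|^{1/(p-\varepsilon)}$, $N^{1/(p-\varepsilon)}$ and $N_0^{1/(p-\varepsilon)}$ each lie between two positive constants independent of $\varepsilon$, so they can be absorbed to produce
\[
C_1 \|\Lambda\|_{\ell^{p),\theta}(I)} \;\le\; \|\Lambda\|_{(L^{p),\theta})_d} \;\le\; C_2 \|\Lambda\|_{\ell^{p),\theta}(I)}.
\]
The main obstacle will be the lower bound: it requires extracting the bounded overlap constant $N_0$ from the abstract well-spread hypothesis (the quantitative content of relative separation together with $U$-density), and one must verify that the supremum over $\varepsilon$ genuinely commutes with the $(p-\varepsilon)$-dependent constants, which is what the compactness of the exponent range delivers.
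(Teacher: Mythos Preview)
Your argument is correct and in fact more careful than the paper's. The paper proceeds by a direct computation: it writes
\[
\int_{\Omega}\Bigl|\sum_{i}|\lambda_i|\chi_{x_i+U}\Bigr|^{p-\varepsilon}dx
=\sum_{i}\int_{x_i+U}\Bigl|\sum_{j}|\lambda_j|\chi_{x_j+U}\Bigr|^{p-\varepsilon}dx
=\sum_{i}|\lambda_i|^{p-\varepsilon}\mu(U),
\]
and so obtains an \emph{equality} $\|\Lambda\|_{(L^{p),\theta})_d}=\mu(U)\|\Lambda\|_{\ell^{p),\theta}}$ rather than an equivalence. This chain tacitly assumes the translates $x_i+U$ are pairwise disjoint (both in splitting the integral and in reducing the inner sum to a single term), whereas the well-spread hypothesis only guarantees a finite overlap bound. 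The paper also pulls the factor $\mu(U)^{1/(p-\varepsilon)}$ outside the supremum as the $\varepsilon$-independent constant $\mu(U)$, which is only literally correct when $\mu(U)=1$.

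Your route is genuinely different: you decompose $I$ into finitely many separated subfamilies for the upper bound, use the pointwise overlap constant $N_0$ for the lower bound, and then absorb the $\varepsilon$-dependent factors $|U|^{1/(p-\varepsilon)}$, $N$, $N_0^{1/(p-\varepsilon)}$ by observing that $1/(p-\varepsilon)$ stays in the compact range $[1/p,1]$. This buys you validity under the stated well-spread hypothesis (not just for disjoint families) at the cost of obtaining a two-sided equivalence $C_1\|\Lambda\|_{\ell^{p),\theta}}\le\|\Lambda\|_{(L^{p),\theta})_d}\le C_2\|\Lambda\|_{\ell^{p),\theta}}$ rather than the exact identity the paper asserts. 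Since the theorem is about identifying the discrete space up to equivalent norms, your conclusion is the appropriate one.
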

\begin{proof}
$\Lambda=\left(\lambda_i\right)_{i\in I}\in \left ( L^{p),\theta }\right)_{d} .$ Then 
\begin{align*}
\sup_{0<\varepsilon \leq p-1}\varepsilon ^{\frac{\theta}{p-\varepsilon}}\left\{\int_{\Omega}\mid\sum\limits_{ i\in I}
\mid\lambda_i\mid\chi_{x_i+U}\mid^{p-\varepsilon}dx\right\}^{\frac{1}{p-\varepsilon}}
&=\sup_{0<\varepsilon \leq p-1}\varepsilon ^{\frac{\theta}{p-\varepsilon}}\left\{\int_{\cup\left({x_i+U}\right)}\mid\sum\limits_{ i\in I}\mid\lambda_i\mid\chi_{x_i+U}\mid^{p-\varepsilon}dx\right\}^{\frac{1}{p-\varepsilon}}
\\
&=\sup_{0<\varepsilon \leq p-1}\varepsilon ^{\frac{\theta}{p-\varepsilon}}\left\{\sum\limits_{ i\in I}\int_{x_i+U}\mid\sum\limits_{ i\in I}\mid\lambda_i\mid\chi_{x_i+U}\mid^{p-\varepsilon}dx\right\}^{\frac{1}{p-\varepsilon}}
\\
&=\sup_{0<\varepsilon \leq p-1}\varepsilon ^{\frac{\theta}{p-\varepsilon}}\left\{\sum\limits_{ i\in I}\mid{\lambda_{i}\mid}^{p-\varepsilon}\int_{x_i+U}\chi_{x_i+U}dx\right\}^{\frac{1}{p-\varepsilon}}
\\
&=\sup_{0<\varepsilon \leq p-1}\varepsilon ^{\frac{\theta}{p-\varepsilon}}\mu\left(U\right)^{\frac{1}{p-\varepsilon}}\left\{\sum\limits_{ i\in I}\mid{\lambda_{i}\mid}^{p-\varepsilon}\right\}^{\frac{1}{p-\varepsilon}}
\\
&=\mu\left( U\right)\sup_{0<\varepsilon \leq p-1}\varepsilon ^{\frac{\theta}{p-\varepsilon}}\left\{\sum\limits_{ i\in I}\mid{\lambda_{i}\mid}^{p-\varepsilon}\right\}^{\frac{1}{p-\varepsilon}}
\\
&=\mu\left( U\right)\|\left\{\lambda_{i}\right\}_{i\in I}\|_{\ell^{p),\theta}}. \tag{9}
\end{align*}
The righ hand side of this equality is finite. This implies
\begin{align*}
 \left(L^{p),\theta}\right)_{d}\subset\ell^{p),\theta}\left(\ I\right).\tag{10}
\end{align*}
Conversely let $\Lambda=\left(\lambda_i\right)_{i\in I}\in\l^{p),\theta}\left(\ I\right).$ Again from (9) we have
\begin{align*}
 \left(L^{p),\theta}\right)_{d}\supset\ell^{p),\theta}\left(\ I\right).\tag{11}
\end{align*}
 Finally from (10) and (11)  we have
\begin{align*}
 \left(L^{p),\theta}\right)_{d}=\ell^{p),\theta}\left(\ I\right).\tag{12}
\end{align*}
\end{proof}
One can easily prove the following theorem if uses the norm of Grand 
amalgam apace instead of the norm of classical Wiener amalgam space and the
technics in ( $\left[ 8\right] ,$ Theorem 2 and $\left[ 15\right] ,$ Theorem
11.6.2)$.$

\begin{theorem}
If $\ \Psi =\left( \psi _{i}\right) _{i\in I}$ is a bounded uniform
partition unity and $V$ is a compact set containing $U,$ then 
\begin{equation*}
\left\Vert f\right\Vert _{W\left( L^{p),\theta },L^{p),\theta }\right)
}\asymp \left\Vert \sum_{i\in I}\left\Vert f\Psi _{i}\right\Vert _{p),\theta
}\chi _{V+y_{i}}\right\Vert _{q),\theta }.
\end{equation*}
\end{theorem}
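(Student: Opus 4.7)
The plan is to establish the two-sided inequality
$\|f\|_{W(L^{p),\theta},L^{q),\theta})}\lesssim \bigl\|\sum_{i}\|f\psi_{i}\|_{p),\theta}\chi_{V+y_{i}}\bigr\|_{q),\theta}\lesssim \|f\|_{W(L^{p),\theta},L^{q),\theta})}$ by mimicking Feichtinger's classical BUPU characterisation of Wiener amalgam spaces, replacing the ordinary Lebesgue local component by the grand Lebesgue norm defined in (3). The only facts about $\|\cdot\|_{p),\theta}$ that the argument uses are monotonicity ($|f|\leq|g|$ implies $\|f\|_{p),\theta}\leq\|g\|_{p),\theta}$) and subadditivity for sums with uniformly bounded overlap, both of which follow from (3) and the subadditivity of the $L^{p-\varepsilon}$-norms inside the supremum.

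For the upper bound, fix $x\in R$. Since $\sum_{i}\psi_{i}\equiv 1$, I would write
$f\chi_{Q+x}=\sum_{i\in I(x)}f\psi_{i}\chi_{Q+x}$,
where $I(x)=\{i:\mathrm{supp}(\psi_{i})\cap (Q+x)\neq\emptyset\}$. By the BUPU condition (d), $\#I(x)\leq N$ uniformly in $x$. Enlarging $V$ if necessary so that $V\supset U\cup(U-Q)$, the condition $\mathrm{supp}(\psi_{i})\subset U+y_{i}$ forces $x\in V+y_{i}$ for every $i\in I(x)$. Monotonicity and finite subadditivity of $\|\cdot\|_{p),\theta}$ then yield
$F_{f}^{p),\theta}(x)\leq \sum_{i\in I(x)}\|f\psi_{i}\|_{p),\theta}\leq \sum_{i\in I}\|f\psi_{i}\|_{p),\theta}\chi_{V+y_{i}}(x)$,
and applying the $L^{q),\theta}$-norm in $x$ gives the desired upper bound.

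For the lower bound, I would first use $|\psi_{i}|\leq M\chi_{U+y_{i}}$ to obtain $\|f\psi_{i}\|_{p),\theta}\leq M\|f\chi_{U+y_{i}}\|_{p),\theta}$. Then, because $Q$ has non-empty interior, the compact set $U$ is covered by finitely many translates $Q+t_{1},\dots,Q+t_{m}$, so translating gives
$\|f\chi_{U+y_{i}}\|_{p),\theta}\leq \sum_{j=1}^{m}\|f\chi_{Q+t_{j}+y_{i}}\|_{p),\theta}=\sum_{j=1}^{m}F_{f}^{p),\theta}(t_{j}+y_{i})$.
For every $x\in V+y_{i}$ one has $t_{j}+y_{i}\in (t_{j}-V)+x$, i.e.\ $F_{f}^{p),\theta}(t_{j}+y_{i})$ is bounded by the local sup of $F_{f}^{p),\theta}$ over a fixed compact neighbourhood of $x$. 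Substituting these estimates into $\sum_{i}\|f\psi_{i}\|_{p),\theta}\chi_{V+y_{i}}(x)$ and using the finite overlap of the family $(V+y_{i})_{i\in I}$ guaranteed by (d), one bounds the whole sum pointwise by a constant multiple of a localised maximal-type function of $F_{f}^{p),\theta}$.

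The main obstacle is therefore the final step: showing that this local averaging/maximal operation is bounded on $L^{q),\theta}$. I would handle it without appealing to a full maximal inequality by exploiting the discrete structure: the pointwise bound reduces, via Theorem~1 in Section~2 (identifying $(L^{q),\theta})_{d}$ with $\ell^{q),\theta}(I)$), to an $\ell^{q),\theta}$-inequality for the sequence $\bigl(\|f\psi_{i}\|_{p),\theta}\bigr)_{i\in I}$ against the sequence $\bigl(F_{f}^{p),\theta}(t_{j}+y_{i})\bigr)_{i,j}$. Since $m$ is fixed and the translation $i\mapsto t_{j}+y_{i}$ is a relatively separated perturbation, the finite-overlap property collapses the $q$-sum to the $L^{q-\varepsilon}$-norm of $F_{f}^{p),\theta}$ uniformly in $\varepsilon$, which after taking the supremum in $\varepsilon$ yields precisely $\|f\|_{W(L^{p),\theta},L^{q),\theta})}$ up to a multiplicative constant depending on $M$, $N$, $m$ and $\mu(U)$, as claimed.
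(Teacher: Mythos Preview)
Your approach is exactly what the paper does: the paper gives no detailed proof of this theorem but merely states that it follows by substituting the grand Lebesgue norm for the classical Lebesgue norm in the BUPU characterisation arguments of Feichtinger \cite{Fei1} (Theorem~2) and Heil (\cite{Gur2}+1, Theorem~11.6.2), which is precisely the strategy you carry out. Your sketch therefore coincides with the paper's intended argument, and in fact supplies more detail than the paper itself.
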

\begin{theorem}
Let $1\leq p\leq \infty $ and let $\ \Psi =\left( \psi _{i}\right) _{i\in I}$
be a bounded uniform partition unity. Then 
\begin{equation*}
\left\Vert f\right\Vert _{W\left( L^{p),\theta },L^{q),\theta }\right)
}\asymp \left\Vert \sum\limits_{i\in I}\left\Vert f\psi _{i}\right\Vert
_{p),\theta }\chi _{U+y_{i}}\right\Vert _{q),\theta }=\left\Vert \left\{
\left\Vert f\psi _{i}\right\Vert _{p),\theta }\right\} _{i\in I}\right\Vert
_{\ell ^{q),\theta }}.
\end{equation*}
\end{theorem}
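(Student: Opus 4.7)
The plan is to chain together two results already available in the paper: the preceding BUPU-comparison theorem and the discrete-space identification $(L^{q),\theta})_{d}=\ell^{q),\theta}(I)$ from Theorem 2 of this section.

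First I would apply the preceding theorem, read with outer exponent $q),\theta$, and specialize $V = U$. Following the indicated techniques of [8, Theorem 2] and [15, Theorem 11.6.2], one compares $\|f\chi_{Q+x}\|_{p),\theta}$ with the local pieces $\|f\psi_i\|_{p),\theta}$ for those indices $i$ such that $Q + x$ meets $U + y_i$, uses condition (d) of the BUPU to control the number of such indices uniformly, and then takes the outer $L^{q),\theta}$-norm. This yields the first claimed comparability
\[
\|f\|_{W(L^{p),\theta},L^{q),\theta})}\asymp \Big\|\sum_{i\in I}\|f\psi_i\|_{p),\theta}\chi_{U+y_i}\Big\|_{q),\theta}.
\]

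Second, to rewrite the right-hand side as an $\ell^{q),\theta}$-norm, I would set $\lambda_i := \|f\psi_i\|_{p),\theta}$ and apply identity (9) in the proof of Theorem 2 with exponent $q),\theta$ and shifts $(y_i)$. This gives
\[
\Big\|\sum_{i\in I}\lambda_i\chi_{U+y_i}\Big\|_{q),\theta}=\mu(U)\,\|(\lambda_i)_{i\in I}\|_{\ell^{q),\theta}(I)},
\]
and the multiplicative factor $\mu(U)$ is absorbed into the comparability $\asymp$ (so the displayed "$=$" in the statement is to be read as "$\asymp$" with an implicit constant depending on $U$).

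The main obstacle is that Theorem 2 was proved for a well-spread family with pairwise disjoint translates $x_i + U$, whereas the BUPU supports $U + y_i$ are only boundedly overlapping by condition (d). Consequently the chain of equalities in (9) must be upgraded to two-sided bounds whose constants depend on the overlap constant $\sup_{x}\natural\{i:x\in U+y_i\}$, and one must verify that $(y_i)_{i\in I}$ is relatively separated and $U$-dense in $\Omega$ — both of which follow directly from axioms (b)--(d). Once these bookkeeping issues are handled, the two equivalences combine immediately to yield the claim.
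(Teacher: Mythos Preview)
Your approach is essentially the paper's own: it too invokes the BUPU comparison theorem for the $\asymp$ and then repeats the computation of (9) (with outer exponent $q),\theta$ and coefficients $\lambda_i=\|f\psi_i\|_{p),\theta}$) to extract the factor $\mu(U)$ and land in $\ell^{q),\theta}$. The disjointness issue you flag is real, but the paper simply passes from $\bigl|\sum_i\lambda_i\chi_{U+y_i}\bigr|^{q-\eta}$ to $\sum_i\lambda_i^{q-\eta}\chi_{U+y_i}$ without comment, so your proposed fix via the overlap constant is more careful than the original.
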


\begin{proof}
From Theorem 1 and Theorem 2, we have
\begin{align}
\left\Vert f\right\Vert _{W\left( L^{p),\theta },L^{q),\theta }\right)
}  &\asymp \left\Vert \sum\limits_{i\in I}\left\Vert f\psi _{i}\right\Vert
_{p),\theta }\chi _{U+y_{i}}\right\Vert _{q),\theta }  \tag{13}
\\
&=\sup_{0<\eta \leq q-1}\eta ^{\frac{\theta }{q-\eta }%
}\left\Vert \sum\limits_{i\in I}\left\Vert f\psi _{i}\right\Vert
_{p),\theta }\chi _{U+y_{i}}\right\Vert _{q-\eta }  \nonumber 
\\
&=\sup_{0<\eta \leq q-1}\eta ^{\frac{\theta }{q-\eta }}(|\int_{\Omega}\sum\limits_{i\in I} \|f\psi _{i}\|_{p),\theta }\chi _{U+y_{i}}| ^{q-\eta}dx) ^{\frac{1}{q-\eta }}\nonumber 
\\
&=\sup_{0<\eta \leq q-1}\eta ^{\frac{\theta }{q-\eta }%
}\left(\sum\limits_{i\in I}\left\Vert f\psi _{i}\right\Vert _{p),\theta
}^{q-\eta }\int_{\Omega}\left\vert \chi _{U+y_{i}}\right\vert ^{q-\eta}dx\right) ^{\frac{1}{q-\eta }}  \nonumber 
\end{align}
In particular,
\begin{equation}
\int_{\Omega }\left\vert \chi _{U+y_{i}}\right\vert ^{q-\eta }dx=\mu
\left( U+y_{i}\right) =\mu \left( U\right) .  \tag {14}
\end{equation}%
Then from $\left( 13\right) $ and $\left( 14\right) ,$%
\begin{eqnarray*}
\left\Vert \sum\limits_{i\in I}\left\Vert f\psi _{i}\right\Vert _{p),\theta
}\chi _{U+y_{i}}\right\Vert _{q),\theta }
&=&\sup_{0<\eta \leq q-1}\eta ^{\frac{\theta }{q-\eta }%
}\left( \sum\limits_{i\in I}\left\Vert f\psi _{i}\right\Vert _{p),\theta
}^{q-\eta }\mu \left( U\right) \right) ^{\frac{1}{q-\eta }} \\
&=&\mu \left( U\right) \sup_{0<\eta \leq q-1}\eta ^{\frac{%
\theta }{q-\eta }}\left( \sum\limits_{i\in I}\left\Vert f\psi
_{i}\right\Vert _{p),\theta }^{q-\eta }\right) ^{\frac{1}{%
q-\eta }} \\
&=&\mu \left( U\right) \left\Vert \left\{ \left\Vert f\psi _{i}\right\Vert
_{p),\theta }\right\} _{i\in I}\right\Vert _{\ell ^{q),\theta }}=\mu \left(
U\right) \left\Vert f\right\Vert _{W\left( L^{p),\theta },\ell ^{q),\theta
}\right) }.
\end{eqnarray*}%
This completes the proof.
\end{proof}
\section{Which  grand amalgam spaces are convolution algebras}
Let $G$ be locally compact Abelian group and $\mu $ its Haar measure.
Zelazko $\left[ 18\right] $ has proved that $L^{p}\left( G\right) $ is an
algebra if and only if $G$ is compact. Steward and Salem $\left[ 17\right] $
have determined necessary and sufficient conditions on $G$ for the Wiener
amalgam space $W\left( L^{p},L^{q}\right) $ to be an algebra under
convolution. In this chapter we have given similar results for
 the generalized grand Lebesgue space $L^{p),\theta }$ and the grand
 amalgam spaces.

\begin{theorem}
Let $G$ be a locally compact Abelian group, and $\mu$ its Haar measure. Then the generalized grand Lebesgue space $L^{p),\theta }$, $1<p<\infty $ is a Banach algebra under convolution if and only if $G$ is compact.
\end{theorem}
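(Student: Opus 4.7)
My plan is to handle the two implications separately.

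For sufficiency, assume $G$ is compact. The crucial point is that on the finite-measure space $G$, H\"older's inequality gives $L^{p-\varepsilon}(G)\hookrightarrow L^{1}(G)$ with $\|g\|_{1}\le\mu(G)^{1-1/(p-\varepsilon)}\|g\|_{p-\varepsilon}$. Combined with Young's inequality in the exponents $(p-\varepsilon,1,p-\varepsilon)$, this yields
\begin{equation*}
\varepsilon^{\theta/(p-\varepsilon)}\|f*g\|_{p-\varepsilon}\le \mu(G)^{1-1/(p-\varepsilon)}\,\varepsilon^{\theta/(p-\varepsilon)}\|f\|_{p-\varepsilon}\,\|g\|_{p-\varepsilon}.
\end{equation*}
Using $\varepsilon^{\theta/(p-\varepsilon)}\|f\|_{p-\varepsilon}\le\|f\|_{p),\theta}$, and bounding $\|g\|_{p-\varepsilon}$ by a fixed multiple of $\|g\|_{p),\theta}$ (by evaluating the grand norm at some auxiliary $\varepsilon_{0}\in(0,p-1]$ and exploiting compactness of $G$), then taking the supremum over $\varepsilon$, I obtain $\|f*g\|_{p),\theta}\le C\|f\|_{p),\theta}\|g\|_{p),\theta}$ with a constant $C$ depending on $p,\theta,\mu(G)$. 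Hence $L^{p),\theta}(G)$ is a Banach algebra under convolution.

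For necessity, assume $L^{p),\theta}(G)$ is a Banach algebra. The strategy is to reduce to Zelazko's theorem $\left[ 18\right]$, which states that $L^{p}(G)$ is a convolution algebra if and only if $G$ is compact. Via the inclusion $L^{p}\subset L^{p),\theta}$ from (2), applied on a dense subspace such as compactly supported functions where both norms are finite, the algebra inequality for $L^{p),\theta}$ transfers to a convolution bound on $L^{p}$, and Zelazko then forces $G$ to be compact. A constructive alternative would pick a compact symmetric neighborhood $V$ of the identity and a sequence $(x_{k})$ with $(V+x_{k})$ pairwise disjoint (available precisely when $G$ is non-compact), and test the algebra inequality on partial sums $f_{N}=\sum_{k=1}^{N}\chi_{V+x_{k}}$. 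By translation invariance of Haar measure, the grand norms of $f_{N}$ and $f_{N}*f_{N}$ reduce via Theorem 1 of Section 2 to sequence-space norms in $\ell^{p),\theta}$, and one shows that the ratio $\|f_{N}*f_{N}\|_{p),\theta}/\|f_{N}\|_{p),\theta}^{2}$ grows without bound as $N\to\infty$, contradicting submultiplicativity.

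The main obstacle will be the necessity direction. The inclusion $L^{p}\subset L^{p),\theta}$ in (2) was stated for bounded $\Omega$, but on a non-compact LCA group it need not hold: a function in $L^{p}(G)$ does not in general lie in $L^{p-\varepsilon}(G)$ when $\mu(G)=\infty$, so the reduction to Zelazko must be executed on a carefully chosen dense subspace. The constructive route is similarly delicate, because after reducing to the sequence case one must identify precisely the weight $\varepsilon^{\theta/(p-\varepsilon)}$ and the $\varepsilon\to 0^{+}$ regime at which the algebra inequality is violated; this forces one to exploit the joint interplay between the geometry of the disjoint translates $(2V+x_{j}+x_{k})$ of $\chi_{V}*\chi_{V}$, the non-translation invariance of the grand norm, and the failure of $L^{p}(G)$ to be a convolution algebra on non-compact $G$.
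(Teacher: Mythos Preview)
Your sufficiency argument is essentially the paper's: on a compact group each $L^{p-\varepsilon}(G)$ is a convolution algebra (the paper cites Zelazko for this; your Young--H\"older derivation is an explicit version), and one then passes to the supremum over $\varepsilon$. One loose point in your sketch: you cannot bound $\|g\|_{p-\varepsilon}$ uniformly in $\varepsilon$ by a constant times $\|g\|_{p),\theta}$, since $\varepsilon^{-\theta/(p-\varepsilon)}\to\infty$ as $\varepsilon\to 0$. The fix is to stop one step earlier, at $\|g\|_{1}$, and bound that directly by $(p-1)^{-\theta}\|g\|_{p),\theta}$ via the choice $\varepsilon_{0}=p-1$ in the grand norm; then Young's inequality with exponents $(p-\varepsilon,1,p-\varepsilon)$ closes the estimate with a constant independent of $\varepsilon$.

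For necessity you miss the shortcut the paper takes. The paper reads the hypothesis as holding for all $\theta\ge 0$, specializes to $\theta=0$, observes that $L^{p),0}(G)=L^{p}(G)$ by definition, and then invokes Zelazko directly to conclude that $G$ is compact. This is a two-line argument that bypasses both of your proposed routes and all the obstacles you describe. Your first route (transfer the algebra inequality from $L^{p),\theta}$ back to $L^{p}$ via the inclusion $L^{p}\subset L^{p),\theta}$) does not work as stated: even on a dense subspace, the inequality $\|f*g\|_{p),\theta}\le C\|f\|_{p),\theta}\|g\|_{p),\theta}$ does not yield $\|f*g\|_{p}\le C'\|f\|_{p}\|g\|_{p}$, because the grand norm is weaker than the $L^{p}$ norm, so the bound goes the wrong way. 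Your second, constructive route may be salvageable for a \emph{fixed} $\theta>0$, but it is substantially more work than what the paper needs, and as you note the non-translation-invariance of the grand norm complicates the computation with $\sum\chi_{V+x_{k}}$. If you intend the theorem for a single fixed $\theta>0$, that is a genuinely stronger statement than what the paper proves; the paper's converse relies on having the case $\theta=0$ available.
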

\begin{proof}
Let $G$ be a compact Abelian group. For the proof of  the generalized grand Lebesgue space $L^{p),\theta }$, $1<p<\infty $ is a Banach algebra under convolution, it is enough to show that 
\begin{equation*}
\left\Vert f\ast g\right\Vert _{p),\theta }\leq \left\Vert f\right\Vert
_{p),\theta }\left\Vert g\right\Vert _{p),\theta }
\end{equation*}%
for all $f,g\in L^{p),\theta }\left( G\right) .$ Since $G$ is compact, then $%
L^{p}\left( G\right) $ is a Banach convolution algebra for all $p,$ $1\leq p\leq\infty,$ by Zelasko $\left[ 18
\right] .$Thus $\left( L^{p-\varepsilon }\left( G\right) ,\left\Vert
.\right\Vert _{p-\varepsilon }\right) $ is a Banach algebra under
convolution for all $0<\varepsilon \leq p-1.$ Then for all $f,g\in
L^{p-\varepsilon }\left( G\right) ,$ we have 
\begin{equation*}
\left\Vert f\ast g\right\Vert _{p-\varepsilon }\leq \left\Vert f\right\Vert
_{p-\varepsilon }\left\Vert g\right\Vert _{p-\varepsilon }.
\end{equation*}%
Thus 
\begin{eqnarray*}
\left\Vert f\ast g\right\Vert _{p),\theta } &=&\sup_{0<\varepsilon \leq
p-1}\varepsilon ^{\frac{\theta }{p-\varepsilon }}\left\Vert f\ast
g\right\Vert _{p-\varepsilon }\leq \sup_{0<\varepsilon \leq p-1}\varepsilon
^{\frac{\theta }{p-\varepsilon }}\left\Vert f\right\Vert _{p-\varepsilon
}.\sup_{0<\varepsilon \leq p-1}\varepsilon ^{\frac{\theta }{p-\varepsilon }%
}\left\Vert g\right\Vert _{p-\varepsilon } \\
&\leq &\left\Vert f\right\Vert _{p),\theta }\left\Vert g\right\Vert
_{p),\theta }.
\end{eqnarray*}

Conversely Assume that $L^{p),\theta }( G) $ is a Banach algebra under convolution
for all $\theta \geq 0$ and, $1<p<\infty$. Since $L^{p),\theta }( G) $ reduces to $
L^{p}( G) $ when $\theta =0$, then $L^p(G)$ is a
Banach algebra under convolution. Thus $G$ is compact again from Zelasko $
\left[ 18\right] .$
\end{proof}

\begin{theorem}
The grand Wiener amalgam space $W\left( L^{p),\theta },L^{q),\theta }\right)
\left( G\right) ,p\geq 1,q>1$ is a Banach algebra \textit{under convolution
if and only if }$G$\textit{\ is compact.}
\end{theorem}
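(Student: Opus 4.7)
The strategy parallels that of Theorem 4: obtain the forward implication by reducing the amalgam problem to the inner grand Lebesgue space, and the converse by specializing the parameter $\theta$. For the forward direction, I would first show that on a compact group $G$ the grand Wiener amalgam space $W(L^{p),\theta},L^{q),\theta})(G)$ coincides, as a set and with equivalent norms, with the grand Lebesgue space $L^{p),\theta}(G)$. Since $Q$ has nonempty interior and $G$ is compact, finitely many translates $\{Q+x_{j}\}_{j=1}^{N}$ cover $G$, so one can choose a BUPU $\Psi=(\psi_{i})_{i\in I}$ with $|I|<\infty$. By Theorem 3 the amalgam norm is then comparable to $\|\{\|f\psi_{i}\|_{p),\theta}\}_{i\in I}\|_{\ell^{q),\theta}}$, which over a finite index set is comparable to $\max_{i}\|f\psi_{i}\|_{p),\theta}$. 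The decomposition $f=\sum_{i}f\psi_{i}$ together with the trivial estimate $F_{f}^{p),\theta}(x)\leq\|f\|_{p),\theta}$ supplies the two-sided norm equivalence. Granted this identification, Theorem 4 tells us that $L^{p),\theta}(G)$ is a Banach convolution algebra when $G$ is compact, and the equivalence of norms transfers submultiplicativity (up to a constant that is absorbed by passing to an equivalent norm) to $W(L^{p),\theta},L^{q),\theta})(G)$.

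For the converse, I would assume the Banach algebra property of $W(L^{p),\theta},L^{q),\theta})(G)$ for every $\theta\geq 0$ and specialize to $\theta=0$. Since $L^{p),0}=L^{p}$ and $L^{q),0}=L^{q}$, the hypothesis reduces to $W(L^{p},L^{q})(G)$ being a Banach convolution algebra; the Steward--Salem theorem [17] recalled at the outset of this section then forces $G$ to be compact.

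The main obstacle is precisely the norm equivalence on compact $G$: one must check that the chain of comparisons between the amalgam norm, the finite $\ell^{q),\theta}$ sum, and the inner $L^{p),\theta}$ norm is uniform in the auxiliary parameters $\varepsilon\in(0,p-1]$ and $\eta\in(0,q-1]$ that enter the grand Lebesgue suprema. Finiteness of the BUPU on the compact group is exactly what supplies this uniformity, after which the remainder of the proof reduces to invoking Theorem 4 and the Steward--Salem result [17].
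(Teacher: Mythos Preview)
Your proposal is correct and follows essentially the same approach as the paper: both directions hinge on the observation that compactness collapses the amalgam structure, and the converse is handled identically by specializing to $\theta=0$ and invoking the Stewart--Salem theorem [17]. The only cosmetic difference in the forward direction is that you first establish the identification $W(L^{p),\theta},L^{q),\theta})(G)\cong L^{p),\theta}(G)$ via a finite BUPU and then import the algebra property from Theorem~4, whereas the paper uses the equality $W(L^{p-\varepsilon},L^{q-\eta})=L^{p-\varepsilon}=L^{q-\eta}$ on compact $G$ together with Theorem~3 to bound $\|f*g\|_{W}$ directly; both routes rest on the same collapse and the same discrete norm equivalence.
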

\begin{proof}
 It is known by $\left[ 17\right] $ that  $W\left(L^{p},L^{q}\right) \left( G\right) $ is a
 Banach algebra under convolution if and only if $G$ is compact. Let $G$ be a compact set. For the proof it is enough to show that
\begin{align*}
 \|f*g\| _{ W\left( L^{p),\theta} ,L^{q),\theta }\right) }\le \left\| f\right\|
_{ W\left( L^{p),\theta },L^{q),\theta }\right) }\|g\| _{ W\left( L^{p),\theta },L^{q),\theta }\right) }
\end{align*}
for all $ f,g \in W\left(L^{p),\theta },L^{q),\theta }\right) \left( G\right) .$
Since $G$ is compact,$W\left(L^{p-\varepsilon},L^{q-\eta}\right)$=$L^{p-\varepsilon}$=$L^{q-\eta}$ and the norms are equvalent.  Let $f,g \in W\left(L^{p),\theta },L^{q),\theta }\right) \left( G\right)$. Then by using Theorem 3, a simple calculation shows that
\begin{align*}
\| f * g\|_{W(L^{p),\theta}, L^{q),\theta})} 
&\leq\sup_{0<\eta\leq q-1}\eta^{\frac{\theta}{q-\eta}}\sup_{0<\varepsilon \leq p-1}\varepsilon ^{\frac{\theta }{p-\varepsilon }}\|f\|_{p-\varepsilon}\|g\|_{q-\eta}
\\
&=\| f\|_{W(L^{p),\theta}, L^{q),\theta})}\| g\|_{W(L^{p),\theta}, L^{q),\theta})}
\end{align*} 
\quad Conversely assume that  $ W\left(L^{p),\theta },L^{q),\theta }\right) \left( G\right) $ is a Banach convolution algebra for all $\theta\geq0.$ Then it is Banach convolution algebra for $\theta=0.$ But if $\theta=0,$ then the grand amalgam space  $ W\left(L^{p),\theta },L^{q),\theta }\right) \left( G\right) $ reduces to classical Wiener amalgam space  $W\left(L^{p},L^{q}\right) \left( G\right).$ So, $W\left(L^{p},L^{q}\right) \left( G\right) $ becomes a Banach convolution algebra. Then by Theorem 1, in $ \left[17\right],$ $G$ becomes compact.

\end{proof}

\end{document}